\documentclass[11pt,a4paper,reqno]{amsart}

\usepackage{amsmath}
\usepackage{amsfonts}
\usepackage{a4wide}
\usepackage{amssymb}
\usepackage{amsthm}
\usepackage{mathrsfs}
\usepackage[colorlinks, citecolor=blue, linkcolor=red]{hyperref}

\flushbottom
\theoremstyle{plain}

\newtheorem{teo}{Theorem}[section]
\newtheorem{lemma}[teo]{Lemma}
\newtheorem{prop}[teo]{Proposition}
\newtheorem{cor}[teo]{Corollary}
\newtheorem{ackn}{Acknowledgments\!}

\theoremstyle{definition}

\theoremstyle{remark}

\numberwithin{equation}{section}

\def\Ft{\mathcal{F}_{t}}

\def\M1{\mathscr{M}_{1}}

\def\eps{\varepsilon}

\newcommand{\kn}{\mathbin{\bigcirc\mkern-15mu\wedge}}

\title[A variational characterization of flat spaces in dimension three]{A variational characterization of flat spaces \\in dimension three}

\author[Giovanni Catino]{Giovanni Catino}
\address[Giovanni Catino]{Dipartimento di Matematica, Politecnico di Milano, Piazza Leonardo da Vinci 32, 20133 Milano, Italy}
\email[]{giovanni.catino@polimi.it}

\author[Paolo Mastrolia]{Paolo Mastrolia}
\address[Paolo Mastrolia]{Dipartimento di Matematica, Universit\`{a} degli Studi di Milano, Italy, Via Cesare Saldini 50, 20133 Milano, Italy}
\email[]{paolo.mastrolia@unimi.it}

\author[Dario D. Monticelli]{Dario D. Monticelli}
\address[Dario D. Monticelli]{Dipartimento di Matematica, Universit\`{a} degli Studi di Milano, Via Cesare Saldini 50, 20133 Milano, Italy}
\email[]{dario.monticelli@unimi.it}

\date{\today}

\begin{document}

\begin{abstract} In this short note we prove that, in dimension three, flat metrics are the only complete metrics with non-negative scalar curvature which are critical for the $\sigma_{2}$-curvature functional.
\end{abstract}

\maketitle

\begin{center}

\noindent{\it Key Words: critical metrics, quadratic functionals, rigidity results}

\medskip

\centerline{\bf AMS subject classification:  53C24, 53C25}

\end{center}

\

\section{Introduction}

Let $(M^{n},g)$ be a Riemannian manifold of dimension $n\geq 3$. To fix the notation, we recall the decomposition of the Riemann curvature tensor of a metric $g$ into the Weyl, Ricci and scalar curvature component
$$
Rm \,= \, W + \frac{1}{n-2} Ric \kn g - \frac{1}{(n-1)(n-2)}R \, g \kn g \,,
$$
where $\kn$ denotes the Kulkarni-Nomizu product. It is well known~\cite{hilbert} that Einstein metrics are critical points for the Einstein-Hilbert functional
$$
\mathcal{H} \,=\, \int R \,dV
$$
on the space of unit volume metrics $\M1(M^{n})$. From this perspective, it is natural to study  canonical metrics which arise as solutions of the Euler-Lagrange equations for more general curvature functionals.  In~\cite{berger3}, Berger commenced the study of Riemannian functionals which are quadratic in the curvature (see~\cite[Chapter 4]{besse} and~\cite{smot} for surveys). A basis for the space of quadratic curvature functionals is given by
$$
\mathcal{W} \,=\, \int |W|^{2} dV\, \quad \mathcal{\rho} \,=\, \int |Ric|^{2} dV\, \quad\, \mathcal{S} \,=\, \int R^{2} dV \,.
$$
All such functionals, which also naturally arise as total actions in certain gravitational field theories in physics, have been deeply studied in the last years by many authors, in particular on compact Riemannian manifolds with normalized volume (for instance, see~\cite{berger3, besse, lamontagne2, lamontagne1, ander2, gurvia3, gurvia1, gurvia2, cat3} and references therein).

On the other hand, the study of critical metrics for quadratic curvature functionals also has a lot of interest in the non-compact setting. For instance, Anderson in~\cite{ander3} proved that every complete three-dimensional critical metric for the Ricci functional $\mathcal{\rho}$ with non-negative scalar curvature is flat, whereas in~\cite{cat2} the first author showed a characterization of complete critical metrics for $\mathcal{S}$ with non-negative scalar curvature in every dimension.

In this paper we focus our attention on the three dimensional case and consider the $\sigma_{2}$-curvature functional
$$
\mathscr{F}_{2} \,=\, \int \sigma_{2}(A) \, dV \,,
$$
where $\sigma_{2}(A)$ denotes the second elementary symmetric function of the eigenvalues of the Schouten tensor $A = Ric - \frac{1}{4}R\,g$. This functional was first considered by Gursky and Viaclovsky in the compact three dimensional case. In~\cite{gurvia3} they proved a beautiful characterization theorem of space forms as critical metrics for $\mathscr{F}_{2}$ on $\M1(M^{3})$ with non-negative energy $\mathscr{F}_{2}\geq 0$. 

The main result of this paper is the following variational characterization of three dimensional flat spaces.

\begin{teo}\label{t-main}
Let $(M^{3},g)$ be a complete critical metric for $\mathscr{F}_{2}$ with non-negative scalar curvature. Then $(M^{3},g)$ is flat.
\end{teo}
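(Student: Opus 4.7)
In dimension three the Weyl tensor vanishes, and a direct computation with $A=Ric-\tfrac{R}{4}g$ gives
$$
\sigma_{2}(A) \,=\, \frac{3}{16}R^{2}-\frac{1}{2}|Ric|^{2},
$$
so that $\mathscr{F}_{2}=\frac{3}{16}\mathcal{S}-\frac{1}{2}\mathcal{\rho}$ on $M^{3}$. My plan is to (i) compute the Euler--Lagrange equation $\mathrm{grad}\,\mathscr{F}_{2}=0$ by combining the classical first-variation formulas for $\int R^{2}\,dV$ and $\int|Ric|^{2}\,dV$ (see~\cite[Ch.~4]{besse}) together with the three-dimensional identity expressing $Rm$ algebraically in terms of $Ric$; (ii) extract from this a clean scalar differential inequality for a suitable nonnegative curvature quantity; and (iii) close the argument by a maximum principle at infinity, exploiting the sign hypothesis $R\ge 0$ and completeness.

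More concretely, the criticality condition becomes a symmetric $(0,2)$-tensor equation whose principal part is essentially $-\Delta Ric+\tfrac{1}{2}\nabla^{2}R-\Delta R\,g$, modulated by polynomial curvature terms. Taking its $g$-trace and its $Ric$-contraction, and eliminating third-order derivatives by means of the contracted second Bianchi identity $\mathrm{div}\,Ric=\tfrac{1}{2}dR$, one should obtain a pair of scalar identities of the schematic form
$$
\Delta R \,=\, c_{0}R^{2}+c_{1}|Ric|^{2},\qquad \Delta|Ric|^{2}\,\ge\, 2|\nabla Ric|^{2}+Q(R,|Ric|^{2}),
$$
with $c_{0},c_{1}$ explicit and $Q$ a quadratic polynomial in the curvature. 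A judicious linear combination of these should then yield a single elliptic inequality of the form $\Delta F\ge c\,F^{\alpha}$, with $c>0$ and $\alpha>1$, for a nonnegative scalar $F$ built out of $R$ and the traceless Ricci $T=Ric-\tfrac{R}{3}g$.

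At this point, the combination of $R\ge 0$ and completeness should allow a maximum principle at infinity (Omori--Yau, or its Pigola--Rigoli--Setti refinement) applied to $F$ to force $F\equiv 0$, i.e.\ the metric is Einstein and therefore of constant sectional curvature $K\ge 0$ in dimension three. A final global step rules out $K>0$: either via the scaling identity $\int\mathrm{tr}(\mathrm{grad}\,\mathscr{F}_{2})\,dV=-\tfrac{1}{2}\mathscr{F}_{2}$ (obtained by testing criticality against $h=g$ and using the scaling degree $-1/2$ of $\mathscr{F}_{2}$ in dimension three) combined with Myers' compactness when $K>0$, or by directly verifying that at a spherical space form $\mathrm{grad}\,\mathscr{F}_{2}$ is a nonzero multiple of $g$, contradicting criticality. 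Hence $K\equiv 0$ and $g$ is flat.

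\textbf{Main obstacle.} I expect the crux of the proof to lie in the passage from $\Delta F\ge c\,F^{\alpha}$ to $F\equiv 0$ on a complete, possibly non-compact, manifold with no a priori lower Ricci bound, so that the classical Omori--Yau principle is not directly available. Overcoming this will require choosing the auxiliary function $F$ very carefully so that a maximum-principle-at-infinity under a sign condition on $R$ alone (in the spirit of~\cite{ander3, cat2}) is applicable, or equivalently so that an integrated / cutoff argument exploiting volume-growth control under $R\ge 0$ can close the inequality. Producing this $F$ with the correct coefficients so that the resulting inequality is strong enough to be closed is, I expect, the decisive computation.
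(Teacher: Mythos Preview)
Your overall architecture (Euler--Lagrange $\to$ scalar inequality $\to$ maximum principle) matches the paper's, but two specific features of the problem are missing from the proposal, and without them the argument as written will not close.

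\medskip

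\textbf{1. The trace equation is algebraic, not differential.} For the functional $\mathcal{F}_t=\rho+t\,\mathcal{S}$ the traced Euler--Lagrange equation reads
\[
\bigl(n+4(n-1)t\bigr)\Delta R \,=\, (n-4)\bigl(|Ric|^2+tR^2\bigr).
\]
In dimension three with $t=-3/8$ the left-hand side vanishes identically, so the trace gives not $\Delta R=c_0R^2+c_1|Ric|^2$ as you anticipate, but the \emph{pointwise algebraic constraint} $\sigma_2(A)=0$, i.e.\ $|E|^2=\tfrac{1}{24}R^2$. This is the decisive structural feature: combined with $R\ge 0$ it forces $Rg\ge 6E$, and hence nonnegative sectional curvature. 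That geometric byproduct is exactly what resolves your stated ``main obstacle'' (no a priori Ricci lower bound): you get it for free from criticality plus $R\ge 0$.

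\medskip

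\textbf{2. The inequality is not for the Laplacian.} Contracting the Euler--Lagrange equation with $E$ produces a Weitzenb\"ock formula containing the cross term $\tfrac14 E_{ij}\nabla^2_{ij}R$, which cannot be absorbed into $\Delta R$. After using $|E|^2=\tfrac{1}{24}R^2$, Kato, and the sharp cubic estimate $E_{ip}E_{jp}E_{ij}\ge -\tfrac{1}{\sqrt6}|E|^3$, the paper arrives not at $\Delta F\ge cF^\alpha$ but at the degenerate elliptic inequality
\[
\bigl(Rg_{ij}-6E_{ij}\bigr)\nabla^2_{ij}R \,\ge\, \tfrac{1}{12}R^3,
\]
with the nonnegative coefficient matrix $a_{ij}=Rg_{ij}-6E_{ij}$ coming from step~1. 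The closing argument is then a Cheng--Yau type cutoff computation (with Calabi's trick at the cut locus), using the Hessian comparison $\nabla^2 r\le \tfrac1r g$ available because of the nonnegative sectional curvature, to force $R\equiv 0$. The Omori--Yau route you propose does not directly handle this non-Laplacian operator.

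\medskip

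Finally, your ``final global step'' ruling out $K>0$ Einstein metrics is unnecessary: once $R\equiv 0$, the algebraic constraint $|E|^2=\tfrac{1}{24}R^2$ immediately gives $E\equiv 0$, hence $Ric\equiv 0$, hence flat.
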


We remark the fact that the non-negativity condition on the scalar curvature cannot be removed. This is clear from the example in~\cite{gurvia3} where the authors exhibit an explicit 
family of critical metrics for $\mathscr{F}_{2}$ on $\mathbb{R}^{3}$. For instance, the metric given in standard coordinates by
$$
g \,=\, dx^{2} + dy^{2} + \big(1+x^{2}+y^{2}\big)^{2} dz^{2}  
$$
is complete, critical and has strictly negative scalar curvature $R=-\frac{8}{1+x^{2}+y^{2}}$.

\

\section{The Euler-Lagrange equation for $\Ft$} \label{s-euler}

In this section we will compute the Euler-Lagrange equation satisfied by critical metrics for $\mathscr{F}_{2}$. To begin, we observe that, in dimension $n\geq 3$, the second elementary symmetric function of the eigenvalues of the Schouten tensor
$$
A \,=\, \frac{1}{n-2} \Big( Ric - \frac{1}{2(n-1)} R\,g\Big)
$$
can be written as
$$
\sigma_{2}(A) \,=\, -\frac{1}{2(n-2)^{2}}|Ric|^{2}+\frac{n}{8(n-1)(n-2)^{2}}R^{2} \,.
$$
In particular, the functional $\mathscr{F}_{2}$ is proportional to a general quadratic functional of the form
$$
\Ft \,=\, \int |Ric|^{2} dV +  t \int R^{2}  dV \,,
$$
with the choice $t=-\frac{n}{4(n-1)}$ (see also~\cite{gurvia1, cat3}). The gradients of the functionals $\mathcal{\rho}$ and $\mathcal{S}$, computed using compactly supported variations, are given by (see~\cite[Proposition 4.66]{besse})
$$
(\nabla \mathcal{\rho})_{ij} \,=\, -\Delta R_{ij} - 2 R_{ikjl}R_{kl}+\nabla^{2}_{ij} R - \frac{1}{2}(\Delta R)g_{ij} + \frac{1}{2} |Ric|^{2} g_{ij} \,,
$$

$$
(\nabla \mathcal{S})_{ij} \,=\, 2 \nabla^{2}_{ij} R - 2(\Delta R) g_{ij} -2 R R_{ij} + \frac{1}{2} R^{2} g_{ij} \,.
$$
Hence, the gradient of $\Ft$ reads 
$$
(\nabla \Ft)_{ij} \,=\, -\Delta R_{ij} +(1+2t)\nabla^{2}_{ij} R - \frac{1+4t}{2}(\Delta R)g_{ij} + \frac{1}{2} \Big( |Ric|^{2}+ t R^{2}\Big) g_{ij} - 2 R_{ikjl}R_{kl} -2t R R_{ij} \,.
$$
Tracing the equation $(\nabla \Ft) = 0$, we obtain
$$
 \Big(n+4(n-1)t\Big)\Delta R \,=\,(n-4)\Big(|Ric|^{2}+ t R^{2}\Big) \,.
$$ 
Defining the tensor $E$ to be the trace-less Ricci tensor, $E_{ij} = R_{ij} - \frac{1}{n} R g_{ij}$, we obtain the Euler-Lagrange equation of critical metrics for $\Ft$.

\begin{prop}\label{p-eulern}
Let $M^{n}$ be a complete manifold of dimension $n\geq 3$. A metric $g$ is critical for $\Ft$ if and only if it satisfies the following equations
\begin{eqnarray*}
\Delta E_{ij} &=&  (1+2t) \nabla^{2}_{ij} R - \frac{n+2+4nt}{2n} (\Delta R) g_{ij} - 2 R_{ikjl} E_{kl} -\frac{2+2nt}{n} R E_{ij} \\&&+  \frac{1}{2}\Big( |Ric|^{2}-\frac{4-n(n-4)t}{n^{2}} R^{2} \Big) g_{ij} \,,
\end{eqnarray*}
\begin{equation*}
\Big(n+4(n-1)t\Big)\Delta R \,=\, (n-4) \Big( |Ric|^{2} + t R^{2} \Big) \,.
\end{equation*}
\end{prop}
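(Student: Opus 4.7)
The plan is to derive both equations directly from the gradient expression for $\Ft$ already displayed in this section. Since a metric is, by definition, critical for $\Ft$ under compactly supported variations if and only if $(\nabla\Ft)_{ij}=0$ pointwise, the proposition reduces to an algebraic rewriting of this single symmetric $2$-tensor equation in the form stated, using the trace-less Ricci tensor $E_{ij}=R_{ij}-\tfrac{1}{n}Rg_{ij}$. No further variational input is needed beyond the gradient formulas for $\mathcal{\rho}$ and $\mathcal{S}$ taken from Besse, which were already combined linearly to produce $\nabla\Ft$.

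The scalar equation is the easy half: tracing $(\nabla\Ft)_{ij}=0$ with $g^{ij}$, so that the $\Delta R_{ij}$ term contributes $\Delta R$ and the Hessian of $R$ contributes $(1+2t)\Delta R$, gives exactly
\[
\bigl(n+4(n-1)t\bigr)\Delta R \,=\, (n-4)\bigl(|\Ric|^{2}+tR^{2}\bigr),
\]
as already noted in the paragraph preceding the statement. For the tensor equation, I would substitute $R_{ij}=E_{ij}+\tfrac{1}{n}Rg_{ij}$ at every occurrence of $R_{ij}$ in $(\nabla\Ft)_{ij}$. In particular
\[
\Delta R_{ij}=\Delta E_{ij}+\tfrac{1}{n}(\Delta R)g_{ij},
\]
and, using $g^{kl}R_{ikjl}=R_{ij}$,
\[
R_{ikjl}R_{kl}=R_{ikjl}E_{kl}+\tfrac{R}{n}E_{ij}+\tfrac{R^{2}}{n^{2}}g_{ij},
\]
while $2tRR_{ij}=2tRE_{ij}+\tfrac{2t}{n}R^{2}g_{ij}$. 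Solving for $\Delta E_{ij}$ and grouping the contributions to $E_{ij}$, to $(\Delta R)g_{ij}$, and to $R^{2}g_{ij}$ then produces the asserted equation.

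The only place where a slip is likely is the final scalar-squared coefficient: one must verify that the $\tfrac{t}{2}R^{2}g_{ij}$ from $\tfrac{1}{2}(|\Ric|^{2}+tR^{2})g_{ij}$, the $-\tfrac{2}{n^{2}}R^{2}g_{ij}$ from the Riemann term, and the $-\tfrac{2t}{n}R^{2}g_{ij}$ from $2tRR_{ij}$ combine exactly into $-\tfrac{4-n(n-4)t}{2n^{2}}R^{2}g_{ij}$. This is the one-line identity $n^{2}t-4(1+nt)=-4+n(n-4)t$; the coefficients of $(\Delta R)g_{ij}$ and of $RE_{ij}$ simplify equally straightforwardly to $\tfrac{n+2+4nt}{2n}$ and $\tfrac{2+2nt}{n}$ respectively. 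The main obstacle is therefore not conceptual but purely the bookkeeping of constants, and once that arithmetic check is done the proof is complete.
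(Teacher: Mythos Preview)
Your proposal is correct and follows exactly the route taken in the paper: the gradient $(\nabla\Ft)_{ij}$ is computed, its trace gives the scalar equation, and the tensor equation is obtained by substituting $R_{ij}=E_{ij}+\tfrac{1}{n}Rg_{ij}$ into $(\nabla\Ft)_{ij}=0$. The paper does not spell out the bookkeeping of constants; your verification of the three coefficients (of $(\Delta R)g_{ij}$, of $RE_{ij}$, and of $R^{2}g_{ij}$) is precisely the computation being implicitly asserted.
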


In dimension three we recall the decomposition of the Riemann curvature tensor
$$
R_{ikjl} \,=\, E_{ij}g_{kl}-E_{il}g_{jk}+E_{kl}g_{ij}-E_{kj}g_{il} + \frac{1}{6}\, R \,(g_{ij}g_{kl}-g_{il}g_{jk}) \,.
$$
In particular, one has
$$
R_{ikjl} E_{kl} \,=\, -2 E_{ip}E_{jp} -\frac{1}{6}R E_{ij} +  |E|^{2}g_{ij} \,.
$$

Hence, if $n=3$ and $t=-n/4(n-1)=-3/8$, one has
$$
\mathscr{F}_{2} \,=\, -\frac{1}{2} \mathcal{F}_{-3/8} \,,
$$
and the following formulas hold.

\begin{prop}\label{p-euler3}
Let $M^{3}$ be a complete manifold of dimension three. A metric $g$ is critical for $\mathscr{F}_{2}$ if and only if it satisfies the following equations
\begin{eqnarray}\label{eq1}
\Delta E_{ij} \,=\,  \frac{1}{4} \nabla^{2}_{ij} R - \frac{1}{12} (\Delta R) g_{ij} + 4 E_{ip} E_{jp} +\frac{5}{12} R E_{ij} -\frac{1}{2}\Big(3|E|^{2} - \frac{1}{72} R^{2}\Big) g_{ij} \,,
\end{eqnarray}

\begin{equation} \label{eq2}
-2\,\sigma_{2}(A) \,=\,  |Ric|^{2} - \frac{3}{8} R^{2}  \,=\, |E|^{2} - \frac{1}{24}R^{2} \,=\, 0 \,.
\end{equation}
\end{prop}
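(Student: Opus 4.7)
The plan is simply to specialize Proposition \ref{p-eulern} to the parameters $n=3$ and $t=-3/8$, which by the identity $\mathscr{F}_{2}=-\tfrac{1}{2}\mathcal{F}_{-3/8}$ gives exactly the critical point condition for $\mathscr{F}_{2}$, and then use the three-dimensional curvature decomposition recorded just above the statement to rewrite the resulting equations. There is no conceptual step here beyond what has already been assembled: everything reduces to elementary algebra.

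\medskip

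For equation \eqref{eq2}, the key observation is that the coefficient in the trace equation of Proposition \ref{p-eulern} satisfies
$$
n+4(n-1)t \,=\, 3 + 8\cdot\bigl(-\tfrac{3}{8}\bigr) \,=\, 0,
$$
so the trace equation degenerates to
$$
0 \,=\, (n-4)\bigl(|Ric|^{2}+tR^{2}\bigr) \,=\, -\bigl(|Ric|^{2}-\tfrac{3}{8}R^{2}\bigr).
$$
Then I would substitute $|Ric|^{2}=|E|^{2}+\tfrac{1}{3}R^{2}$ to get $|E|^{2}-\tfrac{1}{24}R^{2}=0$, and compare with the formula
$$
\sigma_{2}(A) \,=\, -\tfrac{1}{2}|Ric|^{2}+\tfrac{3}{16}R^{2}
$$
(obtained from the general expression of $\sigma_{2}(A)$ in the excerpt at $n=3$) to identify $-2\sigma_{2}(A)$ with the same quantity. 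This gives \eqref{eq2}.

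\medskip

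For equation \eqref{eq1}, I would substitute $n=3$ and $t=-3/8$ directly into the first equation of Proposition \ref{p-eulern}. The scalar coefficients become $1+2t=\tfrac{1}{4}$, $(n+2+4nt)/(2n)=\tfrac{1}{12}$, $(2+2nt)/n=-\tfrac{1}{12}$, and $(4-n(n-4)t)/n^{2}=\tfrac{23}{72}$. I would then replace $R_{ikjl}E_{kl}$ using the three-dimensional identity
$$
R_{ikjl}E_{kl} \,=\, -2E_{ip}E_{jp}-\tfrac{1}{6}RE_{ij}+|E|^{2}g_{ij}
$$
supplied in the excerpt, and also convert $|Ric|^{2}$ into $|E|^{2}+\tfrac{1}{3}R^{2}$ in the $g_{ij}$-term. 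Collecting the $E_{ij}$-coefficient gives $\tfrac{1}{3}R+\tfrac{1}{12}R=\tfrac{5}{12}R$, and collecting the $g_{ij}$-coefficient yields $-2|E|^{2}+\tfrac{1}{2}|E|^{2}+\tfrac{1}{144}R^{2}=-\tfrac{1}{2}\bigl(3|E|^{2}-\tfrac{1}{72}R^{2}\bigr)$, matching \eqref{eq1} exactly.

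\medskip

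In short, there is no real obstacle; the only thing one has to be careful about is the bookkeeping of the scalar coefficients, and in particular the fact that the Laplacian-of-scalar-curvature trace identity becomes a purely algebraic constraint precisely because $n+4(n-1)t=0$ at $n=3$, $t=-3/8$. This algebraic degeneration is what produces the scalar equation \eqref{eq2}, which will be the crucial input for the rigidity argument in the next section.
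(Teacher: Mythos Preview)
Your proposal is correct and follows exactly the route the paper itself takes: the paper derives Proposition~\ref{p-euler3} simply by specializing Proposition~\ref{p-eulern} to $n=3$, $t=-3/8$ (using $\mathscr{F}_{2}=-\tfrac12\mathcal{F}_{-3/8}$) and inserting the three-dimensional identity $R_{ikjl}E_{kl}=-2E_{ip}E_{jp}-\tfrac16RE_{ij}+|E|^{2}g_{ij}$. Your bookkeeping of the coefficients, including the observation that $n+4(n-1)t=0$ forces the scalar constraint \eqref{eq2}, is accurate and in fact more explicit than what the paper writes down.
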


Now, contracting equation~\eqref{eq1} with $E$ we obtain the following Weitzenb\"ock formula.

\begin{cor}
Let $M^{3}$ be a complete manifold of dimension three. If $g$ is a critical metric for $\mathscr{F}_{2}$, then the following formula holds
\begin{equation}\label{eq-w}
\frac{1}{2}\Delta |E|^{2} \,=\,  |\nabla E|^{2} + \frac{1}{4} E_{ij}\nabla^{2}_{ij} R +4 E_{ip} E_{jp}E_{ij} +\frac{5}{12} R |E|^{2} \,.
\end{equation}
\end{cor}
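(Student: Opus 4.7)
The proof is a direct contraction of equation~\eqref{eq1} with $E_{ij}$ combined with the standard Bochner identity for a symmetric $2$-tensor.

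First I would recall the elementary identity
\begin{equation*}
\tfrac{1}{2}\Delta |E|^{2} \,=\, |\nabla E|^{2} + E_{ij}\Delta E_{ij},
\end{equation*}
which follows from two applications of the covariant derivative together with the Leibniz rule. This reduces everything to computing $E_{ij}\Delta E_{ij}$.

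Next I would multiply both sides of~\eqref{eq1} by $E_{ij}$ and sum, exploiting the fact that $E$ is trace-free, i.e.\ $g_{ij}E_{ij}=\tr E=0$. This immediately kills the two pure-trace contributions on the right-hand side of~\eqref{eq1}, namely the terms proportional to $(\Delta R)g_{ij}$ and to $(3|E|^{2}-\tfrac{1}{72}R^{2})g_{ij}$. What survives is
\begin{equation*}
E_{ij}\Delta E_{ij} \,=\, \tfrac{1}{4} E_{ij}\nabla^{2}_{ij} R + 4 E_{ip}E_{jp}E_{ij} + \tfrac{5}{12} R\,|E|^{2},
\end{equation*}
since $E_{ij}E_{ij}=|E|^{2}$. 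Substituting this into the Bochner identity yields precisely~\eqref{eq-w}.

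There is essentially no obstacle here: the only things to keep track of are the trace-free character of $E$, which sweeps away the $g_{ij}$ terms, and the matching of indices in the cubic contraction $E_{ip}E_{jp}E_{ij}$, which is already in the symmetric form appearing in~\eqref{eq1}. The identity is thus a one-line consequence of Proposition~\ref{p-euler3} and the Bochner formula, with no further curvature manipulations required at this stage.
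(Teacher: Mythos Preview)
Your argument is correct and is exactly the approach the paper takes: the Weitzenb\"ock formula~\eqref{eq-w} is obtained by contracting~\eqref{eq1} with $E_{ij}$, using that $E$ is trace-free to kill the $g_{ij}$ terms, and then plugging into the identity $\tfrac{1}{2}\Delta|E|^{2}=|\nabla E|^{2}+E_{ij}\Delta E_{ij}$.
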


\

\section{Proof of Theorem~\ref{t-main}}

In this section we will prove Theorem~\ref{t-main}. We  assume that $(M^{3},g)$ is a critical metric for $\mathscr{F}_{2}$ with non-negative scalar curvature $R\geq 0$. In particular, $g$ has zero $\sigma_{2}$-curvature, i.e. $|E|^{2} = \frac{1}{24} R^{2}$ and we obtain
$$
\frac{1}{2} \Delta |E|^{2} \,=\, \frac{1}{48} \Delta R^{2} \,=\, \frac{1}{24} R \Delta R + \frac{1}{24} |\nabla R|^{2} \,.
$$
Putting together this equation with~\eqref{eq-w}, we obtain that the scalar curvature $R$ satisfies the following PDE

\begin{equation}\label{eq-pde}
\frac{1}{24} \Big( R g_{ij} - 6 E_{ij} \Big) \nabla^{2}_{ij} R \,=\, |\nabla E|^{2} - \frac{1}{24}|\nabla R|^{2} + 4 E_{ip} E_{jp}E_{ij} +\frac{5}{12} R |E|^{2} \,.
\end{equation}

To begin, we need the following purely algebraic lemmas.

\begin{lemma} \label{l1}
Let $(M^{3},g)$ be a Riemannian manifold with $R\geq 0$ and $\sigma_{2}(A) \geq 0$. Then, 
$$
R g_{ij} \,\geq\, 6 E_{ij}
$$
and $g$ has non-negative sectional curvature.
\end{lemma}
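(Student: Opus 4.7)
I plan to reduce the lemma to an elementary algebraic statement about the three eigenvalues of the trace-free Ricci tensor $E$ at an arbitrary point $p\in M^{3}$. Both hypotheses and conclusions are pointwise tensor statements, so I fix $p$ and diagonalize $E$ with eigenvalues $e_{1},e_{2},e_{3}$ satisfying $e_{1}+e_{2}+e_{3}=0$. By the identity $-2\sigma_{2}(A)=|E|^{2}-\tfrac{1}{24}R^{2}$ recorded in Proposition \ref{p-euler3}, the hypothesis $\sigma_{2}(A)\geq 0$ is equivalent to $|E|^{2}\leq R^{2}/24$.

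The key step is the pointwise bound $|e_{i}|\leq R/6$ for every $i$. For any permutation $(i,j,k)$ of $(1,2,3)$, the relation $e_{j}+e_{k}=-e_{i}$ combined with Cauchy--Schwarz gives $e_{j}^{2}+e_{k}^{2}\geq \tfrac{1}{2}(e_{j}+e_{k})^{2}=\tfrac{1}{2}e_{i}^{2}$, so $|E|^{2}\geq \tfrac{3}{2}e_{i}^{2}$. Combined with $|E|^{2}\leq R^{2}/24$ this yields $e_{i}^{2}\leq R^{2}/36$, and since $R\geq 0$ I conclude $e_{i}\leq R/6$.

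This single bound produces both conclusions. First, the eigenvalues of the symmetric tensor $Rg-6E$ are $R-6e_{i}\geq 0$, hence $Rg_{ij}\geq 6E_{ij}$. Second, in dimension three the Weyl tensor vanishes, so $Rm$ is determined by $\mathrm{Ric}$; a direct computation from the decomposition of $Rm$ displayed just before Proposition \ref{p-euler3} shows that in an orthonormal frame $\{e_{a}\}$ diagonalizing $E$ the curvature operator is diagonal, with the sectional curvature of the coordinate plane $e_{a}\wedge e_{b}$ equal to $e_{a}+e_{b}+R/6=R/6-e_{c}\geq 0$, where $c$ is the third index. Since in dimension three every $2$-form is decomposable, the minimum sectional curvature coincides with the minimum eigenvalue of the curvature operator, so $g$ has non-negative sectional curvature. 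No substantive obstacle is expected: the content of the lemma is a one-line Cauchy--Schwarz applied to the zero-sum constraint on the spectrum of $E$, together with the dimension-three curvature identity already recorded in the paper.
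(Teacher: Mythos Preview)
Your argument is correct and follows essentially the same strategy as the paper: reduce to a pointwise eigenvalue computation and show that the largest eigenvalue of $E$ (equivalently of $A$) is controlled by $R$. The only cosmetic differences are that the paper diagonalizes the Schouten tensor $A$ and uses the identity $\sigma_{2}(A)=(\lambda_{1}+\lambda_{2})\operatorname{tr}(A)-(\lambda_{1}^{2}+\lambda_{2}^{2}+\lambda_{1}\lambda_{2})$ to get $\lambda_{1}+\lambda_{2}\geq 0$, and then cites Hamilton for the passage from $\mathrm{Ric}\leq \tfrac{1}{2}Rg$ to non-negative sectional curvature, whereas you diagonalize $E$, use Cauchy--Schwarz, and compute the sectional curvatures directly from the three-dimensional decomposition of $Rm$.
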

\begin{proof} Let $\lambda_{1}\leq \lambda_{2} \leq \lambda_{3}$ be the eigenvalues of the Schouten tensor $A=E + \frac{1}{12} R g $ at some point. Then, by assumptions, we have
$$
4 R \,=\, tr(A) \,=\, \lambda_{1} + \lambda_{2} + \lambda_{3} \,\geq\, 0 \quad \quad \hbox{and}\quad \quad  \sigma_{2}(A)\,=\,\lambda_{1} \lambda_{2} + \lambda_{1}\lambda_{3} +\lambda_{2} \lambda_{3} \geq 0 \,.
$$
We want to show that $E \leq \frac{1}{6} R g$ or, equivalently, that 
$$
 A \,\leq\, \frac{1}{4} R g \,=\, \hbox{tr}(A) g \,.
$$ 
Hence, it is sufficient to prove that $\lambda_{3} \leq \hbox{tr}(A) = \lambda_{1} + \lambda_{2} + \lambda_{3}$, i.e. $\lambda_{1} + \lambda_{2} \geq 0$. But this follows by
$$
0 \,\leq\, \lambda_{1} \lambda_{2} + \lambda_{1}\lambda_{3} +\lambda_{2} \lambda_{3} \,=\, ( \lambda_{1} + \lambda_{2} ) \hbox{tr}(A) - (\lambda_{1}^{2} + \lambda_{2}^{2}+\lambda_{1}\lambda_{2} ) \,\leq\, ( \lambda_{1} + \lambda_{2} ) \hbox{tr}(A) \,.
$$
The fact that $g$ has non-negative sectional curvature follows from the decomposition of the Riemann tensor in dimension three and the curvature condition $Ric \,\leq\, \frac{1}{2} R g $ (for instance see~\cite[Corollary 8.2]{hamilton1}).
\end{proof}

\begin{lemma}\label{l2}
Let $(M^{3},g)$ be a Riemannian manifold with $R\geq 0$ and $\sigma_{2}(A) = \hbox{const} \geq 0 $. Then, 
$$
|\nabla E|^{2} \,\geq\, \frac{1}{24}|\nabla R|^{2} \,.
$$
\end{lemma}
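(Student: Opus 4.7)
The strategy is to exploit the fact that $\sigma_{2}(A)$ being constant rigidly couples $|E|^{2}$ and $R^{2}$ pointwise, reducing the desired gradient estimate to a Cauchy--Schwarz argument. In dimension three one has the identity
\[
-2\,\sigma_{2}(A) \,=\, |E|^{2} - \frac{1}{24}R^{2}
\]
(already used in~\eqref{eq2}). Hence the hypothesis $\sigma_{2}(A)=\mathrm{const}$ gives, after covariant differentiation,
\[
E_{ij}\nabla_{k}E_{ij} \,=\, \frac{1}{24}\,R\,\nabla_{k}R \qquad \text{for every } k,
\]
while $\sigma_{2}(A)\geq 0$ yields the algebraic pointwise bound $|E|^{2}\leq \tfrac{1}{24}R^{2}$.

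For each fixed $k$, Cauchy--Schwarz with respect to the Frobenius inner product on symmetric $(0,2)$-tensors gives
\[
\Big(E_{ij}\nabla_{k}E_{ij}\Big)^{2} \,\leq\, |E|^{2}\sum_{i,j}\big(\nabla_{k}E_{ij}\big)^{2}.
\]
Substituting the differentiated identity and summing over $k$ yields
\[
\frac{R^{2}\,|\nabla R|^{2}}{576} \,\leq\, |E|^{2}\,|\nabla E|^{2}.
\]
At any point where $|E|^{2}>0$, the algebraic bound forces $R\neq 0$, and dividing produces
\[
|\nabla E|^{2} \,\geq\, \frac{R^{2}\,|\nabla R|^{2}}{576\,|E|^{2}} \,\geq\, \frac{|\nabla R|^{2}}{24},
\]
which is the desired inequality.

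It only remains to handle the degenerate set $\{|E|^{2}=0\}$. At such a point the basic identity reduces to $R^{2}=48\,\sigma_{2}(A)$. If $\sigma_{2}(A)>0$, then $R\neq 0$ and the pointwise differential identity $E_{ij}\nabla_{k}E_{ij}=\tfrac{1}{24}R\,\nabla_{k}R$ with vanishing left-hand side forces $\nabla R=0$; if instead $\sigma_{2}(A)=0$, then $R=0$, so the non-negative function $R$ attains a minimum and again $\nabla R=0$. In both subcases the conclusion is trivially satisfied. I do not foresee any deep obstacle here: the entire argument is essentially one careful Cauchy--Schwarz estimate, and the only delicate point is the short case analysis on the zero set of $E$, handled by combining the constraint with the non-negativity of $R$.
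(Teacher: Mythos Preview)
Your proof is correct and essentially the same as the paper's: both differentiate the constraint $|E|^{2}-\tfrac{1}{24}R^{2}=\mathrm{const}$, apply Cauchy--Schwarz (the paper phrases this step as Kato's inequality $|\nabla|E||\le|\nabla E|$, which is exactly your Cauchy--Schwarz), and combine with $|E|^{2}\le\tfrac{1}{24}R^{2}$, handling the degenerate cases where $|E|=0$ or $R=0$ separately. The only difference is the ordering of the case split, which is immaterial.
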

\begin{proof} We will follow the proof in~\cite[Lemma 4.1]{gurvia3}. Let $p$ be a point in $M^{3}$. If $R(p)=0$, then $\nabla R=0$ and the lemma follows. So we can assume that $R(p)>0$. Since $-2\,\sigma_{2}(A)= |E|^{2} - \frac{1}{24} R^{2} = const$, one has
\begin{equation}\label{eqp}
|E|^{2} |\nabla |E||^{2} \,=\, \frac{1}{576} R^{2} |\nabla R|^{2} \,. 
\end{equation}
By Kato's inequality $|\nabla |E||^{2} \leq |\nabla E|^{2}$ and the fact that $|E|^{2} \leq \frac{1}{24} R^{2}$, we obtain
$$
|E|^{2} |\nabla E|^{2} \, \geq \, \frac{1}{576} R^{2} |\nabla R|^{2} \,\geq \, \frac{1}{24} |E|^{2} |\nabla R|^{2} \,.
$$
Dividing by $|E|^{2}(p)\neq 0$ the result follows, otherwise, if $|E|(p)=0$, $(\nabla R)(p)=0$ from equation~\eqref{eqp}, and we conclude.
\end{proof}

\begin{lemma}\label{l3}
Let $(M^{3},g)$ be a Riemannian manifold. Then,
$$
E_{ip} E_{jp}E_{ij} \,\geq\, - \frac{1}{\sqrt{6}} |E|^{3} \,.
$$
\end{lemma}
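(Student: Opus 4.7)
The plan is to diagonalize the symmetric trace-free tensor $E$ pointwise and reduce the inequality to an elementary algebraic statement about three real numbers summing to zero. Fix $p\in M^{3}$ and let $\mu_{1},\mu_{2},\mu_{3}\in\mathbb{R}$ denote the eigenvalues of $E$ at $p$. Since $\tr(E)=0$ one has $\mu_{1}+\mu_{2}+\mu_{3}=0$, and in the principal frame
$$
E_{ip}E_{jp}E_{ij}\,=\,\tr(E^{3})\,=\,\mu_{1}^{3}+\mu_{2}^{3}+\mu_{3}^{3},\qquad |E|^{2}\,=\,\mu_{1}^{2}+\mu_{2}^{2}+\mu_{3}^{2}.
$$
Newton's identity, combined with the vanishing of the first power sum, produces the well-known simplification $\mu_{1}^{3}+\mu_{2}^{3}+\mu_{3}^{3}=3\mu_{1}\mu_{2}\mu_{3}$, so the claim reduces to the purely algebraic inequality
$$
3\mu_{1}\mu_{2}\mu_{3}\,\geq\,-\frac{1}{\sqrt{6}}\bigl(\mu_{1}^{2}+\mu_{2}^{2}+\mu_{3}^{2}\bigr)^{3/2}.
$$

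To verify this, I would parameterize traceless triples by
$$
\mu_{j}\,=\,r\,\cos\!\Bigl(\theta+\tfrac{2\pi}{3}(j-1)\Bigr),\qquad j=1,2,3,\ r\geq 0,\ \theta\in\mathbb{R},
$$
which automatically enforces $\mu_{1}+\mu_{2}+\mu_{3}=0$; standard product-to-sum formulas then give the two identities
$$
\mu_{1}^{2}+\mu_{2}^{2}+\mu_{3}^{2}\,=\,\tfrac{3}{2}\,r^{2},\qquad \mu_{1}\mu_{2}\mu_{3}\,=\,\tfrac{1}{4}\,r^{3}\cos(3\theta).
$$
Eliminating $r$ between the two produces the sharp equality
$$
3\mu_{1}\mu_{2}\mu_{3}\,=\,\frac{\cos(3\theta)}{\sqrt{6}}\bigl(\mu_{1}^{2}+\mu_{2}^{2}+\mu_{3}^{2}\bigr)^{3/2},
$$
and the desired bound is then immediate from $|\cos(3\theta)|\leq 1$, with equality precisely when $\cos(3\theta)=-1$, i.e.\ when two of the $\mu_{j}$ coincide and the third equals $-2$ times their common value.

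The argument is entirely elementary and I do not expect any genuine obstacle. The only point requiring a small verification is the product identity $\mu_{1}\mu_{2}\mu_{3}=\tfrac{1}{4}r^{3}\cos(3\theta)$, which follows from two applications of the product-to-sum formula. As a cross-check, a Lagrange multiplier analysis of $\mu_{1}\mu_{2}\mu_{3}$ on the compact set $\{\sum_{i}\mu_{i}=0,\ \sum_{i}\mu_{i}^{2}=1\}$ forces every critical point to have two coinciding eigenvalues, and a direct substitution recovers the same extremal value $\mp 1/\sqrt{6}$, confirming that the trigonometric parameterization indeed captures the global minimum.
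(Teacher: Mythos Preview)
Your argument is correct. The paper itself does not actually prove this lemma but simply refers the reader to \cite[Lemma~4.2]{gurvia3}, so there is no proof in the text to compare against; what you have written is strictly more than the paper provides. Your method---diagonalizing $E$ pointwise, using $\sum_i\mu_i=0$ to rewrite $\tr(E^{3})=3\mu_1\mu_2\mu_3$, and then exploiting the trigonometric parameterization of traceless triples to obtain the closed form $\tr(E^{3})=\tfrac{\cos(3\theta)}{\sqrt{6}}|E|^{3}$---yields the sharp constant directly and also identifies the equality case. The only step worth spelling out is that the map $(r,\theta)\mapsto\bigl(r\cos\theta,\,r\cos(\theta+\tfrac{2\pi}{3}),\,r\cos(\theta+\tfrac{4\pi}{3})\bigr)$ is surjective onto the plane $\{\sum_i\mu_i=0\}$; this holds because, for each fixed $r$, it traces out at constant speed the entire circle of radius $r\sqrt{3/2}$ in that plane, so no traceless triples are missed. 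Your Lagrange-multiplier cross-check is an equally valid stand-alone proof and sidesteps this surjectivity point altogether.
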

\begin{proof} For a proof of this lemma, for instance, see~\cite[Lemma 4.2]{gurvia3}.
\end{proof}

\begin{cor}\label{c-pdesub}
Let $(M^{3},g)$ be a complete critical metric for $\mathscr{F}_{2}$ with non-negative scalar curvature. Then, $R g_{ij} \geq 6 E_{ij}$, $g$ has non-negative sectional curvature and the scalar curvature satisfies the following differential inequality 
$$
 \Big( R g_{ij} - 6 E_{ij} \Big) \nabla^{2}_{ij} R \,\geq\, \frac{1}{12} R^{3} \,.
$$
\end{cor}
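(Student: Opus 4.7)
The plan is to combine the three preceding lemmas with the pointwise identity from Proposition~\ref{p-euler3}, equation~\eqref{eq2}, and feed everything into the PDE~\eqref{eq-pde} derived in the beginning of this section.

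First I would use~\eqref{eq2}: since $g$ is critical for $\mathscr{F}_{2}$, we have the pointwise identity $|E|^{2}=\tfrac{1}{24}R^{2}$, which in particular gives $\sigma_{2}(A)=0\geq 0$. Together with the hypothesis $R\geq 0$, this places us in the setting of Lemmas~\ref{l1} and~\ref{l2}, yielding immediately that $Rg_{ij}\geq 6E_{ij}$ (so the differential operator on the left of~\eqref{eq-pde} is elliptic in divergence form with nonnegative coefficients), that $g$ has nonnegative sectional curvature, and that the gradient term $|\nabla E|^{2}-\tfrac{1}{24}|\nabla R|^{2}$ appearing on the right of~\eqref{eq-pde} is nonnegative.

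Next I would bound the cubic term in~\eqref{eq-pde} from below. By Lemma~\ref{l3},
\[
4E_{ip}E_{jp}E_{ij}\,\geq\,-\frac{4}{\sqrt{6}}|E|^{3}.
\]
Using again $|E|^{2}=\tfrac{1}{24}R^{2}$ and $R\geq 0$, so that $|E|=\tfrac{R}{2\sqrt{6}}$, this becomes
\[
4E_{ip}E_{jp}E_{ij}\,\geq\,-\frac{4}{\sqrt{6}}\cdot\frac{R^{3}}{48\sqrt{6}}\,=\,-\frac{R^{3}}{72}.
\]
Similarly, the remaining zero-order term on the right of~\eqref{eq-pde} is
\[
\frac{5}{12}R|E|^{2}\,=\,\frac{5R^{3}}{288}.
\]

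Finally I would substitute these bounds into~\eqref{eq-pde}, discarding the nonnegative gradient term to get
\[
\frac{1}{24}\bigl(Rg_{ij}-6E_{ij}\bigr)\nabla^{2}_{ij}R\,\geq\,-\frac{R^{3}}{72}+\frac{5R^{3}}{288}\,=\,\frac{R^{3}}{288},
\]
and multiplying by $24$ produces exactly the claimed differential inequality. The only step requiring any care is the arithmetic check that the two constants $-1/72$ and $5/288$ combine to a strictly positive number, which is the whole point of the corollary (and the reason a maximum-principle argument on $R$ will work in the proof of Theorem~\ref{t-main}); everything else is a direct consequence of the three lemmas and the vanishing of $\sigma_{2}(A)$.
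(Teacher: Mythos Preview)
Your proof is correct and follows essentially the same route as the paper: invoke Lemmas~\ref{l1}--\ref{l3} together with the constraint $|E|^{2}=\tfrac{1}{24}R^{2}$ from~\eqref{eq2}, drop the nonnegative gradient term in~\eqref{eq-pde}, and compute $\tfrac{5}{12}R|E|^{2}-\tfrac{4}{\sqrt{6}}|E|^{3}=\tfrac{1}{288}R^{3}$. The only quibble is your parenthetical remark that the operator is ``elliptic in divergence form'': the operator $a_{ij}\nabla^{2}_{ij}$ with $a_{ij}=Rg_{ij}-6E_{ij}\geq 0$ is degenerate elliptic in non-divergence form, but this plays no role in the argument here.
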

\begin{proof} From equation~\eqref{eq-pde}, combining lemmas~\ref{l1}~\ref{l2} and~\ref{l3}, we obtain
$$
\frac{1}{24}\Big( R g_{ij} - 6 E_{ij} \Big) \nabla^{2}_{ij} R \, \geq\, \frac{5}{12} R |E|^{2} -\frac{4}{\sqrt{6}} |E|^{3} \,=\, |E|^{2} \Big( \frac{5}{12}R - \frac{4}{\sqrt{6}} |E| \Big) \,=\, \frac{1}{288} R^{3} \,,
$$
where in the last equality we have used the fact that $|E|^{2} = \frac{1}{24} R^{2}$.
\end{proof}

Now we can prove Theorem~\ref{t-main}. Clearly, if $M^{3}$ is compact, from Corollary~\ref{c-pdesub}, at a maximum point of $R$ we obtain $R\leq 0$. Hence, $R\equiv 0$ on $M^{3}$ and from equation~\eqref{eq2}, $Ric\equiv 0$ and the metric is flat. So, from now on, we will assume the manifold $M^{3}$ to be non-compact.

Choose now $\phi=\phi(r)$ be a function of the distance $r$ to a fixed point $O\in M^{3}$ and let $B_{s}(O)$ be a geodesic ball of radius $s>0$. We denote by $C_{O}$ the cut locus at the point $O$ and we choose $\phi$ satisfying the following properties: $\phi=1$ on $B_{s}(O)$, $\phi=0$ on $M^{3} \setminus B_{2s}(O)$ and
$$
-\frac{c}{s} \phi^{3/4} \,\leq\, \phi' \,\leq\, 0  \quad \quad \hbox{and} \quad \quad |\phi''| \,\leq\, \frac{c}{s^{2}} \, \phi^{1/2} \,
$$
on $B_{2s}(O) \setminus B_{s}(O)$ for some positive constant $c>0$. In particular, $\phi$ is $C^{3}$ in $M^{3}\setminus C_{O}$. Let $u := R \phi$ and $a_{ij}:= \Big( R g_{ij} - 6 E_{ij} \Big)$. From Corollary~\ref{c-pdesub}, we know that $a_{ij}\geq 0$ and we obtain
\begin{eqnarray} \label{eqn3}
 a_{ij} \nabla^{2}_{ij} u &=& a_{ij} \Big( \phi \nabla^{2}_{ij}R + R \nabla^{2}_{ij} \phi + 2 \nabla_{i}R \nabla_{j} \phi \Big) \\ \nonumber
 &\geq& \frac{1}{12} R^{3} \phi + R \phi' a_{ij} \nabla^{2}_{ij} r + R \phi'' a(\nabla r, \nabla r) + 2 a(\nabla R, \nabla \phi) \,.
\end{eqnarray}
Now, let $p_{0}$ be a maximum point of $u$ and assume that $p_{0}\notin C_{O}$. If $\phi(p_{0})=0$, then $u\equiv 0$ and then $R\equiv 0 $ on $B_{2s}(O)$. Hence, from now on we will assume  $\phi(p_{0})>0$. Then, at $p_{0}$, we have $\nabla u (p_{0})=0$ and $\nabla^{2}_{ij} u (p_{0}) \leq 0$. In particular, at $p_{0}$, one has
$$
\nabla R (p_{0}) \,=\, -\frac{R(p_{0})}{\phi(p_{0})} \nabla \phi (p_{0}) \,.
$$ 
Moreover, since $a_{ij} \geq 0$, for every vector field $X$, one has $a(X,X) \leq \hbox{tr}(a)|X|^{2} = 3 R |X|^{2}$. On the other hand, from standard Hessian comparison theorem, since $g$ has non-negative sectional curvature, we know that on $M^{3}\setminus C_{O}$, one has $ \nabla^{2}_{ij} r \,\leq\, \frac{1}{r} g_{ij} \,.$ Thus, from equation~\eqref{eqn3}, at $p_{0}$, we get
\begin{eqnarray*}
0 &\geq & \frac{1}{12} R^{3} \phi + R \phi' a_{ij} \nabla^{2}_{ij} r + R \phi'' a(\nabla r, \nabla r) - 2 \frac{R}{\phi} a(\nabla \phi, \nabla \phi) \\
&\geq & \frac{1}{12} R^{3} \phi - \Big(\frac{|\phi'|}{r}+|\phi''| + 2 \frac{(\phi')^{2}}{\phi} \Big)R \, \hbox{tr}(a) \\
&\geq & \frac{1}{12} R^{3} \phi - 3\Big(\frac{|\phi'|}{s}+|\phi''| + 2 \frac{(\phi')^{2}}{\phi} \Big) R^{2} \,,
\end{eqnarray*}
where, in the last inequality, we have used the fact that $r\geq s$ on $B_{2s}(O)\setminus B_{s}(O)$, i.e. where $\phi' \neq 0$. From the assumptions on the cut-off function $\phi$, we obtain, at the maximum point $p_{0}$
$$
0 \,\, \geq \,\, \frac{1}{12} R^{2} \phi^{1/2} \Big( R\phi^{1/2} - \frac{c'}{s^{2}} \Big)  
$$ 
for some positive constant $c'>0$. Thus, we have proved that, if $p_{0} \notin C_{O}$, then for every $p\in B_{2s}(O)$
$$
u(p)\,\leq \,u(p_{0}) \,=\, R(p_{0}) \phi (p_{0}) \,\leq\, \frac{c'}{s^{2}}\,. 
$$
If $p_{0}\in C_{O}$ we argue as follows (this trick is usually referred to Calabi). Let $\gamma:[0,L] \rightarrow M^{3}$, where $L=d(p_{0},O)$, be a minimal geodesic joining $O$ to $p_{0}$, the maximum point of $u$. Let $p_{\eps}=\gamma(\eps)$ for some $\eps>0$. Define now
$$
u_{\eps}(x) \,=\, R (x)\phi \big(d(x,p_{\eps})+\eps\big)\,.
$$
Since $d(x,p_{\eps})+\eps\geq d(x,O)$ and $d(p_{0},p_{\eps}) + \eps = d(p_{0},O)$, it is easy to see that $u_{\eps}(p_{0}) = u (p_{0})$ and 
$$
u_{\eps}(x) \,\leq \, u(x) \quad\quad\hbox{for all }\,x \in M^{3} \,, 
$$
since $\phi'\leq 0$. Hence $p_{0}$ is also a maximum point for $u_{\eps}$. Moreover, $p_{0}\notin C_{p_{\eps}}$, so the function $d(x,p_{\eps})$ is smooth in a neighborhood of $p_{0}$ and we can apply the maximum principle argument as before to obtain an estimate for $u_{\eps}(p_{0})$ which depends on $\eps$. Taking the limit as $\eps\rightarrow 0$, we obtain the desired estimate on $u$.

By letting $s\rightarrow +\infty$ we obtain $u\equiv 0$, so $R\equiv 0$. From equation~\eqref{eq2} we have $E\equiv 0$ and so $Ric\equiv 0$ and Theorem~\ref{t-main} follows.

This concludes the proof of Theorem~\ref{t-main}.

\

\

\begin{ackn} The authors are members of the Gruppo Nazionale per
l'Analisi Matematica, la Probabilit\`{a} e le loro Applicazioni (GNAMPA) of the Istituto Nazionale di Alta Matematica (INdAM). The first author is supported by the GNAMPA project ``Equazioni di evoluzione geometriche e strutture di tipo Einstein''. The second and the third authors are supported by the GNAMPA project ``Analisi Globale ed Operatori Degeneri''. The third author is partially supported by FSE, Regione Lombardia. 
\end{ackn}

\

\bibliographystyle{amsplain}
\bibliography{biblio}

\bigskip

\parindent=0pt

\

\end{document}